\newcommand{\IR}{\mathbb{R}}
\newcommand{\IN}{\mathbb{N}}
\newcommand{\eps}{\varepsilon}
\newcommand{\ov}[1]{\overline{#1}}
\newcommand{\td}[1]{\widetilde{#1}}
\DeclareMathOperator{\Ric}{Ric}
\DeclareMathOperator{\Rm}{Rm}
\DeclareMathOperator{\id}{id}
\DeclareMathOperator{\dist}{dist}
\DeclareMathOperator{\eucl}{eucl}
\newcommand{\EMPTY}[1]{}
\newtheorem{Theorem}{Theorem}%[section]
\newtheorem{Lemma}[Theorem]{Lemma}
\numberwithin{equation}{section}
\title[A Ricci flow proof of a result by Gromov]{A Ricci flow proof of a result by Gromov on lower bounds for scalar curvature}
\author{Richard H Bamler}
\address{Department of Mathematics, UC Berkeley, CA 94720, USA}
\email{rbamler@math.berkeley.edu}
\date{\today}
\begin{document}

\begin{abstract}
In this note we reprove a theorem of Gromov using Ricci flow.
The theorem states that a, possibly non-constant, lower bound on the scalar curvature is stable under $C^0$-convergence of the metric.
\end{abstract}

\maketitle

\section{Introduction}
In \cite{Gromov}, Gromov proved the following theorem as an application of his theory of singular spaces with positive scalar curvature:

\begin{Theorem} \label{Thm:mainthm}
Let $M$ be a (possibly open) smooth manifold and $\kappa : M \to \IR$ a continuous function.
Consider a sequence of $C^2$ Riemannian metrics $g_i$ on $M$ that converges to a $C^2$ Riemannian metric $g$ in the local $C^0$-sense.
Assume that for all $i = 1, 2, \ldots$ the scalar curvature of $g_i$ satisfies $R (g_i) \geq \kappa$ everywhere on $M$.
Then $R (g) \geq \kappa$ everywhere on $M$.
\end{Theorem}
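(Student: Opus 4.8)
The plan is to argue by contradiction, reducing to a local statement with a \emph{constant} lower bound and then evolving the $g_i$ by Ricci flow, exploiting that lower bounds on scalar curvature are preserved along the flow. The essential difficulty is that the hypotheses control the $g_i$ only in $C^0$, so their curvatures blow up as $i\to\infty$ and there is no Ricci flow from $g_i$ with an $i$-uniform existence time. I would get around this by using the \emph{Ricci--DeTurck} flow with a fixed flat background: by the $L^\infty$ well-posedness theory for small $C^0$-perturbations of the Euclidean metric $\delta$ on $\IR^n$ (Koch--Lamm and its refinements), such a perturbation $g_0$ with $\sigma:=\|g_0-\delta\|_{C^0}$ small flows for a universal time $\tau$, depends continuously on $g_0$, obeys the smoothing estimates $|{\Rm}(g(t))|\le C\sigma/t$ and $|X(t)|\le C\sigma/\sqrt t$ (with $X$ the DeTurck vector field and $C$ universal), and, if $g_0$ is $C^2$, converges to $g_0$ in $C^2_{loc}$ as $t\to 0$.

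Here is the setup. Suppose $R(g)(p)<\kappa(p)$ at some $p$ and fix $c$ with $R(g)(p)<c<\kappa(p)$. Choosing a chart adapted to $g$ at $p=0$ and shrinking, I obtain a Euclidean ball $B_2\subset\IR^n$ on which $R(g)<c$, $\kappa>c$, and $g$ is $C^0$-close to $\delta$; call this closeness $\sigma$. For large $i$ then $R(g_i)\ge\kappa>c$ on $B_2$ and $\|g_i-\delta\|_{C^0(B_2)}\le 2\sigma$. After modifying $g$ and the $g_i$ on $B_2\setminus B_{3/2}$ so that they equal $\delta$ outside $B_{7/4}$, I may assume $g,g_i$ are complete metrics on $\IR^n$, equal to $\delta$ outside $B_{7/4}$, with $\|g-\delta\|_{C^0},\|g_i-\delta\|_{C^0}\le 3\sigma$, and---since the modification is supported away from $B_{3/2}$---still $R(g_i)\ge c$ on $B_{3/2}$ and $R(g)(0)<c$.

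Next I would run the $\delta$-Ricci--DeTurck flows $g_i(t)$, $t\in[0,\tau]$, starting from $g_i$, with the above estimates, and set $R_i(t)=R(g_i(t))$. Because the Ricci--DeTurck flow is a Ricci flow up to diffeomorphism, $\partial_t R_i\ge\Delta_{g_i(t)}R_i+\tfrac2n R_i^2+\langle X_i,\nabla R_i\rangle$. We know $R_i(0)\ge c$ on $B_{3/2}$, $R_i(t)\ge -C\sigma/t$ on $\IR^n$ for $t>0$ from the curvature bound, and $R_i$ is continuous up to $t=0$ since $g_i$ is $C^2$. I would then run a localized maximum principle on $B_1\times[0,\tau]$, comparing $R_i$ with $c-v_i$, where $v_i\ge 0$ solves $\partial_t v_i=\Delta_{g_i(t)}v_i+\langle X_i,\nabla v_i\rangle$ with zero initial data and boundary value $(c+C\sigma/t)_+$ on $\partial B_1$. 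On the parabolic boundary $R_i\ge c-v_i$ (at $t=0$ because $R_i(\cdot,0)\ge c$ on $\bar B_1\subset B_{3/2}$; on $\partial B_1$ because $R_i\ge -C\sigma/t$), and $w:=R_i-(c-v_i)$ is a supersolution of the same linear parabolic operator, so the minimum principle gives $R_i\ge c-v_i$ on $B_1\times[0,\tau]$. Finally, since $0$ has universal distance to $\partial B_1$, the blow-up $(c+C\sigma/t)_+\sim C\sigma/t$ of the boundary data is more than compensated by the fact that the probability that Brownian motion for $g_i(t)$ with drift $X_i$ reaches $\partial B_1$ by time $t$ is super-polynomially small in $t$ as $t\to0$ (Gaussian heat-kernel bounds, valid under $|{\Rm}(g_i(t))|\le C\sigma/t$). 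Hence $v_i(0,t)\le\omega(t)$ with $\omega(t)\to 0$ as $t\to0$, \emph{uniformly in $i$}, and therefore $R_i(0,t)\ge c-\omega(t)$ for $t\in(0,\tau]$, uniformly in $i$.

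To conclude I would let $i\to\infty$: by continuous dependence and interior smoothing, $g_i(t)\to g_\infty(t)$ in $C^\infty_{loc}$ for $t>0$, where $g_\infty$ is the Ricci--DeTurck flow from $g$, so $R(g_\infty(t))(0)\ge c-\omega(t)$; then I would let $t\to0$ and use $g_\infty(t)\to g$ in $C^2_{loc}$, hence $R(g_\infty(t))(0)\to R(g)(0)$, to obtain $R(g)(0)\ge c$, contradicting $R(g)(0)<c$. The hard part will be the localized maximum principle: the comparison must be confined near $p$, because neither the flow estimates nor the bound $R_i(0)\ge c$ survive the modification away from $p$, while the barrier has to absorb the $-C\sigma/t$ blow-up of $R_i$ near $t=0$ coming from the uncontrolled region---which forces one to use both the favorable sign of the reaction term $\tfrac2n R_i^2$ and sharp Gaussian heat-kernel decay. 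The remaining ingredients---$L^\infty$ well-posedness and smoothing for the Ricci--DeTurck flow, heat-kernel bounds under the resulting curvature control, and $C^2$-continuity at $t=0$ for $C^2$ initial data---are by now standard.
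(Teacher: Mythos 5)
Your overall plan coincides with the paper's: argue by contradiction, reduce to a local statement near a point, cut off to a compactly supported perturbation of $g_{\eucl}$ on $\IR^n$, run the $g_{\eucl}$-Ricci--DeTurck flow using the Koch--Lamm $L^\infty$ theory (uniform existence time, $|{\Rm}|\le C/t$ smoothing, $C^0$-continuous dependence on initial data, $C^2$-continuity at $t=0$), propagate the lower scalar curvature bound to the center using Gaussian decay of the parabolic kernel, pass $i\to\infty$ and then $t\to0$. The only question is whether your localization step --- the ``hard part'' you flag --- actually works.

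It does not. The comparison function $v_i$ does not exist as posed; in fact $v_i(0,t)=+\infty$ for every $t>0$. By the Duhamel (equivalently, probabilistic) representation of the solution of the drift--heat equation on $B_1\times(0,\tau]$ with zero initial data and lateral data $\phi(s)=(c+C\sigma/s)_+$,
\[
v_i(0,t)\;=\;\mathbb{E}_0\big[\phi(t-T)\,\mathbf{1}_{\{T<t\}}\big]\;=\;\int_0^t p(u)\,\phi(t-u)\,du,
\]
where $T$ is the first exit time from $B_1$ of the diffusion started at the origin at time $t$ (run backward) and $p$ its density. The Gaussian hitting bound you invoke controls $p(u)$ only for small $u$; the singularity of $\phi$ sits at $s=0$, i.e.\ at the other endpoint $u=t$, where $p(u)$ is bounded below by a positive constant (for fixed $t$) and $\phi(t-u)\sim C\sigma/(t-u)$ is not integrable. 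Thus the integral diverges. The heuristic ``the probability of hitting $\partial B_1$ by time $t$ is super-polynomially small'' is true but aimed at the wrong end of the integral: that small probability multiplies an infinite conditional expectation of the boundary data. The reaction term $\tfrac{2}{n}R_i^2$ does not help either --- your barrier $c-v_i$ already solves the weaker linear inequality, and the quadratic term only pushes the comparison further in the same (already established) direction, without removing the non-integrability. Trying to improve the boundary datum on $\partial B_1$ for small $s$ from $-C\sigma/s$ to something bounded would require proving a lower scalar-curvature bound on a slightly larger ball, which is exactly the statement one is trying to prove --- an infinite regress if the domain is kept fixed.

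The missing idea is exactly the one supplied by Lemma~\ref{Lem:scalestimate}: the comparison domain must \emph{expand as $t\searrow 0$}, at a rate calibrated to parabolic scaling. The paper takes $t_k=(1-\theta)^k$ and $r_k=1-(1-\beta)^k$ with $(1-\beta)^2>1-\theta$, so that $(r_{k+1}-r_k)^2/(t_k-t_{k+1})$ grows geometrically in $k$, and iterates the heat kernel representation $R(x,t_k)\ge\int K(x,t_k;y,t_{k+1})R(y,t_{k+1})\,dg_{t_{k+1}}$ for $x\in B(o,r_k)$: the main contribution comes from $B(o,r_{k+1})$, while the remainder, where only $R\ge -C_3/t_{k+1}$ is available, is dominated by the summable tail $\exp(-c(1+\lambda)^k)$. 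This iteration replaces your fixed $\partial B_1$. If you prefer a barrier formulation, the boundary must be a moving surface $\partial B_{\rho(t)}$ with $\rho(t)\nearrow 1$ as $t\searrow 0$, and the right choice of $\rho$ is essentially the geometric sequences above. The rest of your outline --- the cutoff, Koch--Lamm well-posedness and smoothing, the two limiting steps $i\to\infty$ and $t\to0$ --- is correct and mirrors the paper.
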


In this note, we present an independent proof of this theorem using Ricci flow.

Let us first sketch the idea of our proof in the most elementary setting:
Consider the case in which $M$ is closed and $\kappa$ is constant on $M$ and assume that all our metrics, $g, g_1, g_2, \ldots$, are smooth.
We may then solve the Ricci flow equation
\begin{equation} \label{eq:RF}
 \partial_t g_t = - 2 \Ric (g_t), 
\end{equation}
starting with the limiting metric $g_0 = g$ for some short time and obtain a smooth solution $(g_t)_{t \in [0,\tau^*)}$ for some $\tau^* > 0$.
By a theorem of Koch and Lamm (cf \cite[sec 5.3]{Koch-Lamm-II}), there are constants $\eps = \eps ( g ) > 0$ and $\tau^* >  \tau = \tau (g) > 0$ such that any smooth metric $g'$ on $M$ that is $(1+\eps)$-bilipschitz to $g$ can be evolved into a smooth Ricci flow $(g')_{t \in [0, \tau)}$ on the uniform time-interval $[0, \tau)$.
So the metrics $g_i$, for sufficiently large $i$, can be evolved into a Ricci flow $(g_{i,t})_{t \in [0, \tau)}$ on $[0, \tau)$.
By the weak maximum principle applied to the evolution equation for the scalar curvature,
\begin{equation} \label{eq:scalarcurvevol}
 \partial_t R (g_{i,t}) = \Delta R (g_{i,t}) + 2 |{\Ric} (g_{i,t}) |^2,
\end{equation}
it follows that $R (g_{i,t}) \geq \kappa$ for all $t \in [0, \tau)$, for sufficiently large $i$.
Using again the results of Koch and Lamm, one can see that the Ricci flows $(g_{i,t})_{t \in [0, \tau)}$ converge locally smoothly to $(g_t)_{t \in [0, \tau)}$ on $M \times (0,\tau)$, modulo diffeomorphisms.
So we have $R (g_t) \geq \kappa$ for all $t \in (0, \tau)$.
Letting $t$ go to zero yields $R(g) = R(g_0) \geq \kappa$.

The difficulties in the following proof arise from the general form of the theorem:
$M$ need not be closed, $\kappa$ need not be constant and the metrics $g, g_1, g_2, \ldots $ have only regularity $C^2$.
We will overcome the first two difficulties by localizing the argument presented in the previous paragraph.
This localization will be the main challenge of this note.
The key estimate for this localization can be found in Lemma \ref{Lem:scalestimate}.
The issue concerning the $C^2$-regularity of the metrics can be addressed by considering the Ricci DeTurck flow instead of the Ricci flow.

I would like to thank John Lott for pointing out Gromov's theorem to me.

\section{Ricci DeTurck flow} \label{sec:Preliminaries}
The Ricci DeTurck flow, as introduced in \cite{DeTurck}, differs from the Ricci flow by a family of diffeomorphisms.
Its evolution equation is strongly parabolic, as opposed to the Ricci flow equation, which is only weakly parabolic.
This parabolicity is a consequence of a gauge fixing, which we will recall in the following.
Choose a background metric $\ov{g}$ on $M$ and define the Bianchi operator
\[ X_{\ov{g}}^i ( h ) = (\ov{g} + h)^{ij} (\ov{g} + h)^{pq} \big( - \nabla^{\ov{g}}_p h_{qj} + \tfrac12 \nabla^{\ov{g}}_j h_{pq} \big) , \]
which assigns a vector field to every symmetric $2$-form $h$ on $M$.
The evolution equation of the Ricci DeTurck flow now reads
\begin{equation} \label{eq:RdTflow}
 \partial_t g_t = - 2 \Ric (g_t) - \mathcal{L}_{X (g_t)} g_t.
\end{equation}
The evolution equation of the difference $h_t := g_t - \ov{g}$ takes the form
\[ \partial_t h_t = \triangle h_t + 2 \Rm * h_t + Q ( h_t, \nabla h_t, \nabla^2 h_t), \]
where
\begin{multline} \label{eq:Q}
Q ( h_t, \nabla h_t, \nabla^2 h_t) =  (\ov{g} + h_t) ^{-1} * (\ov{g} + h_t) ^{-1} * \nabla h_t * \nabla h_t  \\
+ \big( (\ov{g} + h_t) ^{-1} - \ov{g}^{-1} \big) * \nabla^2 h_t . 
\end{multline}
(Here all covariant derivatives are taken with respect to $\ov{g}$.)
So if $h_t$ is $C^0$-close to $\ov{g}$, then (\ref{eq:RdTflow}) is strongly parabolic.

Given a solution $(g_t)_{t \in I}$ of the Ricci DeTurck flow equation (\ref{eq:RdTflow}), we can construct a solution $(\td{g}_t)_{t \in I}$ to the Ricci flow equation (\ref{eq:RF}) by pulling back via a family of diffeomorphisms as follows:
Let $(\Phi_t)_{t \in I}$ be a flow of the time-dependent family of vector fields $X_{\ov{g}} (g_t)$, meaning that
\begin{equation} \label{eq:Phi}
 \partial_t \Phi_t = X_{\ov{g}} (g_t) \circ \Phi_t.
\end{equation}
Then $\td{g}_t := \Phi^*_t g_t$ satisfies the Ricci flow equation (\ref{eq:RF}).

We will also need to use the heat kernel on a Ricci flow and Ricci DeTurck flow background.
Consider first the heat kernel $\td{K} (x,t; y,s)$, $s < t$, on a Ricci flow background $(\td{g}_t = \Phi^*_t g_t)_{t \in I}$, that is for fixed $(y,s) \in M \times I$ we have
\[ \partial_t \td{K} (x, t; y, s) = \Delta_{\td{g}_t, x} \td{K} (x,t; y,s) \]
and $\td{K} (\cdot, t; y,s)$ approaches a $\delta$-function centered at $y$ as $t \searrow s$.
Then, for fixed $(x,t) \in M \times I$, the function $\td{K} (x,t; \cdot, \cdot )$ is a kernel of the conjugate heat equation
\[ - \partial_s \td{K} (x,t ;  y, s) = \Delta_{\td{g}_s, y} \td{K} (x,t; y,s) - \td{R}(y,s) \td{K} (x,t; y,s). \]
Here $\td{R}(y,s)$ denotes the scalar curvature of $\td{g}_s$ in $y$.
Note that this equation implies that for all $s < t$
\begin{equation} \label{eq:KintegralRF}
 \int_M \td{K} (x,t; y,s) d\td{g}_s (y) = 1 
\end{equation}

Consider now the push-forward $K(x,t; y,s)$ of $\td{K}(x,t; y,s)$ under $\Phi_t$.
That is
\[ K(x,t; y,s) := \td{K} (\Phi^{-1}_t (x), t; \Phi^{-1}_s (y), s). \]
This kernel is the associated heat kernel on the Ricci DeTurck flow background $(g_t)_{t \in I}$ and it satisfies
\begin{equation} \label{eq:heateqRdT}
 \partial_t K (x,t; y,s) = \Delta_{g_t, x} K (x,t; y,s) - \partial_{X_{\ov{g}} (g_t), x}  K(x, t; y,s) 
\end{equation}
for fixed $(y,s) \in M \times I$, as well as
\[ - \partial_s K (x,t ;  y, s) = \Delta_{\td{g}_s, y} K (x,t; y,s) - R(y,s) K (x,t; y,s) + \partial_{X_{\ov{g}} (g_s), y} K(x, t; y,s). \]
for fixed $(x,t) \in M \times I$.
As a direct consequence of (\ref{eq:KintegralRF}), we also obtain
\begin{equation} \label{eq:KintegralRFdT}
\int_M K (x,t; y,s) dg_s (y) = 1.
\end{equation}

\section{Proof}
In the following, we will fix some dimension $n \geq 2$ of the manifold $M$ and we will not mention this dependence anymore.
We will also frequently consider Euclidean space $\IR^n$ with the standard Euclidean metric $g_{\eucl}$ and origin $o \in \IR^n$.

Let us first establish and recall a short-time existence result for Ricci DeTurck flows, which is mainly a consequence of the work of Koch and Lamm (cf \cite{Koch-Lamm}) and which will become important for us.
It states that metrics that are sufficiently close to the Euclidean metric in the $C^0$-sense can be evolved by the Ricci DeTurck flow on a uniform time-interval.
This flow becomes instantly smooth and depends continuously on the initial data.
Note that we have phrased the following lemma specifically such that it can be applied to our situation.
In fact, with some additional work, the lemma can be strengthened in several aspects:
For example, the Ricci DeTurck flow $(g_t)_{t \in [0,1)}$ can actually be extended to the time-interval $[0, \infty)$, the condition that $g - g_{\eucl}$ is compactly supported is not necessary and we have bounds on all higher derivatives of $g_t$.

\begin{Lemma} \label{Lem:KochLamm}
There are constants $\eps > 0$, $C_1 < \infty$ such that the following is true:

Consider a Riemannian metric $g$ on $\IR^n$ of regularity $C^2$ that is $(1+\eps)$-bilipschitz close to the standard Euclidean metric $g_{\eucl}$ and assume that $g - g_{\eucl}$ is compactly supported.
Then there is a continuous family of Riemannian metrics $(g_t)_{t \in [0, 1)}$ on $\IR^n$ such that the following holds:
\begin{enumerate}[label=(\alph*)]
\item For all $t \geq 0$, the metric $g_t$ is $1.1$-bilipschitz to $g_{\eucl}$.
\item $(g_t)$ is smooth on $\IR^n \times (0,1)$ and the map $[0,1) \to C^2 (\IR^n)$, $t \mapsto g_t$ is continuous.
\item $g_0= g$ and $(g_t)_{t \in (0, 1)}$ is a solution to the Ricci DeTurck equation
\begin{equation} \label{eq:RdTflow-eucl}
 \partial_t g_t = - 2 \Ric (g_t) - \mathcal{L}_{X_{g_{\eucl}} (g_t)} g_t.
\end{equation}
\item For any $t > 0$ and any $m = 0, \ldots, 10$ we have
\[ |\partial^m g_t| < \frac{C_1}{t^{m/2}}. \]
\item If $(g_{i,t})_{t \in [0,1)}$ is a sequence of solutions to (\ref{eq:RdTflow-eucl}) that are continuous on $\IR^n \times [0,1)$ and smooth on $\IR^n \times (0,1)$ and if $g_{i,0}$ converges to some metric $g_{0}$ uniformly in the $C^0$-sense, then $(g_{i,t})$ converges to $(g_t)$ uniformly in the $C^0$-sense on $\IR^n \times [0,1)$ and locally in the smooth sense on $\IR^n \times (0,1)$.
\end{enumerate}
\end{Lemma}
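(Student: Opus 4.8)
The plan is to obtain the Ricci DeTurck flow as a fixed point of a parabolic integral equation, following the strategy of Koch and Lamm, and then to extract the additional properties (a)--(e) from the quantitative estimates that come with the fixed-point construction. Writing $h_t = g_t - g_{\eucl}$, equation (\ref{eq:RdTflow-eucl}) becomes the quasilinear equation $\partial_t h_t = \triangle h_t + 2 \Rm * h_t + Q(h_t, \nabla h_t, \nabla^2 h_t)$ from Section \ref{sec:Preliminaries}, but over the Euclidean background the curvature term vanishes, so we are left with $\partial_t h = \triangle h + Q(h, \nabla h, \nabla^2 h)$ where $Q$ is given by (\ref{eq:Q}). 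One rewrites this via Duhamel's formula using the Euclidean heat kernel and seeks a solution in a space of functions on $\IR^n \times (0,1)$ whose norm controls $\sup_t \|h_t\|_{C^0}$ together with the parabolically scaled quantities $\sup_t t^{m/2}\|\nabla^m h_t\|_{C^0}$ for $m = 1, 2$ (and, by bootstrapping, all higher $m$). The nonlinearity $Q$ is, schematically, a sum of a term quadratic in $\nabla h$ with smooth coefficients in $h$, and a term $\big((\ov g + h)^{-1} - \ov g^{-1}\big)*\nabla^2 h$ whose coefficient is small whenever $\|h\|_{C^0}$ is small; hence for $\eps$ small enough the map $h \mapsto (\text{linear evolution of } g - g_{\eucl}) + (\text{Duhamel term for } Q)$ is a contraction on a small ball of this function space, and Banach's fixed point theorem produces a unique solution on $[0,1)$.

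With the solution in hand, each item follows from the accompanying estimates. The a priori bound $\|h_t\|_{C^0} \le$ (a small multiple of) $\|h_0\|_{C^0} \le \eps'$, valid for all $t$, gives (a) after shrinking $\eps$ so that a $(1+\eps)$-bilipschitz initial metric stays inside the $1.1$-bilipschitz ball for all time; here one uses that the fixed-point norm is built to dominate $\sup_t\|h_t\|_{C^0}$ and that the nonlinear correction is controlled by the same small quantity. Item (d) is exactly the statement that the fixed point lies in the weighted space, i.e. $t^{m/2}\|\nabla^m g_t\| = t^{m/2}\|\nabla^m h_t\|$ is bounded; for $m \le 2$ this is part of the basic norm, and for $3 \le m \le 10$ one runs the standard parabolic bootstrap, differentiating the equation and applying interior Schauder/heat-kernel estimates on parabolic cylinders, each step costing one more half-power of $t$ — this is routine and I would not carry it out in detail. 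Smoothness on $\IR^n \times (0,1)$ in (b) is the same bootstrap carried to all orders, and the continuity of $t \mapsto g_t$ into $C^2(\IR^n)$ up to $t = 0$ follows from the $C^0$-continuity of $t\mapsto h_t$ (built into the function space) together with the instantaneous-regularization estimates, which force $\nabla h_t, \nabla^2 h_t \to \nabla h_0, \nabla^2 h_0$ as $t \searrow 0$ when the initial data is genuinely $C^2$; the compact-support hypothesis on $g - g_{\eucl}$ is what lets one work on all of $\IR^n$ with these global norms and guarantees the needed decay at infinity.

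For the stability statement (e), the key point is uniqueness plus the contraction estimate applied to differences. Given a sequence $(g_{i,t})$ of solutions to (\ref{eq:RdTflow-eucl}), continuous up to $t = 0$ and smooth for $t > 0$, with $g_{i,0} \to g_0$ in $C^0$, I would first note that for $i$ large the $g_{i,0}$ are $(1+\eps)$-bilipschitz to $g_{\eucl}$ (since $g_0$ is, and $C^0$-convergence of metrics implies bilipschitz-convergence once the limit is bilipschitz-close), hence by uniqueness $g_{i,t}$ coincides with the flow produced by the construction applied to $g_{i,0}$, and likewise for $g_0$. Then the fixed-point equation, subtracted for two initial data, shows $\|h_{i,t} - h_{j,t}\|$ in the weighted norm is bounded by a fixed fraction of itself plus $C\|h_{i,0} - h_{j,0}\|_{C^0}$, whence $\sup_{t\in[0,1)}\|g_{i,t} - g_{j,t}\|_{C^0} \lesssim \|g_{i,0} - g_{j,0}\|_{C^0} \to 0$; this gives uniform $C^0$-convergence on $\IR^n \times [0,1)$, and feeding this into the interior estimates (which on any $\IR^n \times [\delta, 1-\delta]$ bound all derivatives in terms of the $C^0$-norm on a slightly larger cylinder) upgrades it to local smooth convergence on $\IR^n\times(0,1)$. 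The main obstacle in all of this is packaging the Koch--Lamm machinery: setting up the right weighted (maximal-regularity) function space in which $Q$ is genuinely a small perturbation, and verifying the contraction and instantaneous-smoothing estimates with constants uniform in the initial data — once that functional-analytic core is in place, (a)--(e) are bookkeeping consequences.
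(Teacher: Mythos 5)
Your overall strategy (Duhamel plus contraction mapping in a parabolically weighted norm, following Koch--Lamm) matches the spirit of the paper's proof, which cites \cite[Theorem 4.3]{Koch-Lamm} for the core analytic input. There are, however, two concrete gaps.

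First, you treat $Q(h,\nabla h,\nabla^2 h)$ as a perturbation whose second-order piece $\big((\ov g + h)^{-1} - \ov g^{-1}\big) * \nabla^2 h$ is small in $C^0$ and therefore harmless. But with the weighted norm you describe (which controls $t\,\|\nabla^2 h_t\|_{C^0}$), the Duhamel contribution of this term to the $C^0$-norm is bounded only by $\eps\int_0^t s^{-1}\,ds$, which diverges. The reason the fixed point closes in the Koch--Lamm framework is the specific algebraic decomposition $Q = R_1(h,\nabla h) + \nabla^* R_2(h,\nabla h)$, where $R_2 = \big((g_{\eucl}+h)^{-1}-g_{\eucl}^{-1}\big)*\nabla h$; writing the dangerous term in divergence form means Duhamel brings in $\nabla e^{(t-s)\Delta}$ at cost $(t-s)^{-1/2}$ applied to something of size $s^{-1/2}$, and $\int_0^t (t-s)^{-1/2}s^{-1/2}\,ds$ is bounded. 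This decomposition is the heart of the argument, and it is also what lets you prove the Lipschitz estimate behind (e) with only $C^0$-closeness of the initial data; without it the contraction would fail in exactly the regime (e) requires. The paper makes this explicit and then invokes a ``multi-valued function'' trick (studying the pair $(g^1_t-g_{\eucl},\,a(g^1_t - g^2_t))$ as a solution of the same type of system) to turn the contraction into the Lipschitz bound $\|g^1_t - g^2_t\|_{C^0} \leq 2C'\|g^1_0 - g^2_0\|_{C^0}$.

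Second, your argument for the $C^2$-continuity of $t\mapsto g_t$ up to $t=0$ in (b) does not work as stated. Instantaneous-regularization estimates provide bounds of the form $\|\nabla^m h_t\| \lesssim t^{-m/2}$, which blow up as $t\searrow 0$; they do not ``force'' $\nabla^2 h_t \to \nabla^2 h_0$. The paper argues (b) differently: the Lipschitz estimate applied to spatial translates of the initial data (which are $C^0$-close since $g_0$ is $C^2$) yields uniform bounds on $\nabla g_t$ and then $\nabla^2 g_t$; one then observes that $\nabla^2 h_t$ itself satisfies an evolution equation of the same general form $\partial_t\nabla^2 h_t = \Delta\nabla^2 h_t + R_{1,t} + \nabla^* R_{2,t}$, so the $C^0$-continuity at $t=0$ for $\nabla^2 h_t$ follows by the same mechanism that gave continuity of $h_t$ in \cite[Theorem 4.3]{Koch-Lamm}. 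You would need to replace your instantaneous-regularization claim with an argument of this type.
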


\begin{proof}
The lemma essentially follows from the work of Koch and Lamm (cf \cite[sec 4]{Koch-Lamm}).
In their paper, the authors analyze and solve (\ref{eq:RdTflow-eucl}) by rewriting the term $Q$ in (\ref{eq:Q}) as
\[ Q( h_t, \nabla h_t, \nabla^2 h_t) = R_1 ( h_t, \nabla h_t ) + \nabla^* R_2 ( h_t, \nabla h_t ), \]
where
\[ R_1 (h_t, \nabla h_t ) = (g_{\eucl} + h_t) ^{-1} * (g_{\eucl} + h_t) ^{-1} * \nabla h_t * \nabla h_t  \]
and $R_2 (h_t, \nabla h_t)$ is a $(0,3)$-tensor, which has the form
\[ R_2 (h_t, \nabla h_t ) = \big( (g_{\eucl} + h_t) ^{-1} - g_{\eucl}^{-1} \big) * \nabla h_t. \]
(Here and in the rest of the proof, all covariant derivatives are taken with respect to $g_{\eucl}$.)
So the evolution equation for $h_t = g_t - g_{\eucl}$ becomes
\begin{equation} \label{eq:generalform}
 \partial_t h_t = \Delta h_t + R_1 (h_t, \nabla h_t ) + \nabla^* R_2 (h_t, \nabla h_t).
\end{equation}

The existence of $(g_t)_{t \in [0, 1)}$ and assertions (a), (c) and (d) are consequences of \cite[Theorem 4.3]{Koch-Lamm}, which follows from a general analysis of equations of the form (\ref{eq:generalform}).
This theorem also provides the bound
\begin{equation} \label{eq:C0bound}
 \Vert h_t \Vert_{C^0(\IR^n \times [0,1))} \leq C \Vert h_0 \Vert_{C^0(\IR^n)}
\end{equation}
as well as bounds on the derivatives of $h_t$ on $\IR^n \times (0,1)$.

Likewise, one may look at two different solutions, $(g^1_t)$ and $(g^2_t)$, of (\ref{eq:RdTflow-eucl}) and find that for any $a > 0$, the multi-valued function $(g^1_t - g_{\eucl}, a (g^1_t - g^2_t))$ satisfies an equation of a form similar to that of (\ref{eq:generalform}).
So if $a > 0$ is chosen small enough such that
\[ \Vert g^1_0 - g_{\eucl} \Vert_{C^0(\IR^n)} + a \Vert g^1_0 - g^2_0 \Vert_{C^0(\IR^n)} < \eps', \]
for some universal $\eps' > 0$, then one can derive the bound, which is similar to (\ref{eq:C0bound}):
\begin{multline*}
 \Vert g^1_t - g_{\eucl} \Vert_{C^0(\IR^n \times [0,1))} + a \Vert g^1_t - g^2_t \Vert_{C^0(\IR^n \times [0,1))} \\
  \leq C' \big( \Vert g^1_0 - g_{\eucl} \Vert_{C^0(\IR^n)} + a \Vert g^1_0 - g^2_0 \Vert_{C^0(\IR^n)} \big).
\end{multline*}
So if $\Vert g^1_0 - g_{\eucl} \Vert_{C^0(\IR^n)} < \frac{\eps}2$, then we may choose $a := \frac{\eps}2 \Vert g^1_0 - g^2_0 \Vert_{C^0(\IR^n)}^{-1}$ and deduce
\begin{equation} \label{eq:lipschitz}
 \Vert g^1_t - g^2_t \Vert_{C^0(\IR^n \times [0,1))} \leq 2 C' \Vert g^1_0 - g^2_0 \Vert_{C^0(\IR^n)}.
\end{equation}
This implies assertion (d).

For assertion (b), observe that (\ref{eq:lipschitz}) implies that difference quotients of $(g_t)$ are uniformly bounded.
So we obtain a uniform bound on $\nabla g_t$.
Similarly, we obtain a uniform bound on $\nabla^2 g_t$.
So
\[  \partial_t \nabla^2 h_t  = \Delta \nabla^2 h_t  + R_{1,t} + \nabla^* R_{2,t}, \]
where
\[ |R_{1,t}| < C'' |\nabla^3 h_t| + C'' \qquad \text{and} \qquad |R_{2,t}| < C'' |h_t| \cdot |\nabla^3 h_t| + C''. \]
The continuity of $\nabla^2 h_t$, then follows similarly as the continuity for $h_t$ in \cite[Theorem 4.3]{Koch-Lamm}.
\end{proof}

Next, we analyze the heat kernel $K(x,t; y,s)$ on a Ricci DeTurck flow background, as introduced in section \ref{sec:Preliminaries}.
Our main observation will be that, on a small time-interval, the kernel can be bounded from above by a standard Gaussian.

\begin{Lemma} \label{Lem:hkestimate}
For any $A < \infty$ there are constants $C_2 = C_2 (A), D = D(A) < \infty$ and $0 < \theta = \theta (A) < \frac12$ such that the following is true:

Let $(g_t)_{t \in [0,\theta]}$ be a smooth solution to the Ricci DeTurck equation (\ref{eq:RdTflow-eucl}) on $\IR^n$.
Assume that $g_t$ is $1.1$-bilipschitz to $g_{\eucl}$ for all $t \in [0, \theta]$ and assume that $| \partial^m g_t | < A$ for all $m = 0, \ldots, 10$.
Consider the heat kernel $K(x,t; y,s)$ on $\IR^n \times [0,\theta]$ as discussed in section \ref{sec:Preliminaries}.
Then, for any $(x,t), (y,s) \in \IR^n \times [0, \theta]$ with $s < t$, we have
\[ K(x,t; y,s ) < \frac{C_2}{(t-s)^{n/2}} \exp \Big( { - \frac{ d_{g_{\eucl}}^2 (x, y)}{D (t-s)}} \Big) \]
and for any $r > 0$
\[ \int_{\IR^n \setminus B (x, r)} K(x,t; y,s) dg_s (y)  < C_2 \exp \Big( { - \frac{ r^2}{D (t-s)}} \Big). \]
Here $B(x,r)$ denotes the $r$-ball in $\IR^n$ with respect to the Euclidean metric $g_{\eucl}$.
\end{Lemma}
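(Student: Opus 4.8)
We follow the standard route to Gaussian heat kernel bounds --- an on-diagonal estimate from a Nash/Moser argument, together with off-diagonal exponential decay of Davies type --- and check that the hypotheses force all constants to depend only on $A$. It is cleanest to pass to the Ricci flow background: with $\Phi_t$ the DeTurck diffeomorphisms of \eqref{eq:Phi} and $\td{g}_t := \Phi^*_t g_t$, section \ref{sec:Preliminaries} gives $K(x,t;y,s) = \td{K}(\Phi^{-1}_t(x),t;\Phi^{-1}_s(y),s)$, where $\td{K}(\cdot,t;y,s)$ solves the genuine heat equation $\partial_t u = \Delta_{\td{g}_t}u$ without a drift term. Since the Bianchi field $X_{g_{\eucl}}(g_t)$ is an algebraic expression in $g_t$, $g_t^{-1}$ and $\nabla^{g_{\eucl}}g_t$, it and its first derivatives are bounded by a constant $C(A)$, so the maps $\Phi^{\pm 1}_t$ are $e^{C(A)t}$-bilipschitz to the identity and displace points by at most $C(A)t$. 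Hence on $[0,\theta]$ the metrics $\td{g}_t$ are, say, $1.2$-bilipschitz to $g_{\eucl}$, the quantities $|\td{R}|$ and $|\partial_t\log\sqrt{\det\td{g}_t}|$ are bounded by $C(A)$, and Euclidean distances in the $x$- and $y$-variables change by at most a bounded factor together with an additive error $C(A)\theta$ when one applies $\Phi^{-1}_t$. It therefore suffices to bound $\td{K}$ by a Gaussian in $d_{g_{\eucl}}$; after shrinking $\theta$ this transports back to the claim with adjusted $C_2$, $D$.

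For the on-diagonal bound one uses that the Euclidean Nash inequality transfers to $(\IR^n, \td{g}_t)$ with a constant controlled by the bilipschitz bound, and that along the flow the $L^1$- and $L^\infty$-norms of a solution grow by at most $e^{C(A)(t-s)}$ (since $\int_{\IR^n}\Delta_{\td{g}_t}u\,d\td{g}_t = 0$ and the volume element has bounded logarithmic derivative). Nash's argument (equivalently, Moser iteration) then yields
\[ \td{K}(x,t;y,s) \le \frac{C(A)}{(t-s)^{n/2}}, \qquad 0\le s<t\le\theta. \]

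For the off-diagonal decay, fix $(y_0,s_0)$, put $u(x,t) := \td{K}(x,t;y_0,s_0)$, and for a bounded Lipschitz function $\psi$ on $\IR^n$ with $|\nabla^{g_{\eucl}}\psi|\le 1$ and a parameter $\lambda\ge 0$ consider $E(t) := \int_{\IR^n}u^2 e^{2\lambda\psi}\,d\td{g}_t$. Differentiating, using $\partial_t u = \Delta_{\td{g}_t}u$, integrating the Laplacian by parts and absorbing the cross term $4\lambda\int_{\IR^n}u\,e^{2\lambda\psi}\langle\nabla u,\nabla\psi\rangle\,d\td{g}_t$ by Young's inequality (with $|\nabla\psi|^2_{\td{g}_t}\le C(A)$) gives
\[ \frac{d}{dt}E(t) \le -\int_{\IR^n}|\nabla u|^2 e^{2\lambda\psi}\,d\td{g}_t + C(A)(\lambda^2+1)\,E(t), \]
so $E(t)\le e^{C(A)(\lambda^2+1)(t-s_0)}E(s')$ for $s_0<s'<t$; together with the on-diagonal bound at $s'=\tfrac12(s_0+t)$ this is a Davies--Gaffney estimate in exponential form. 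Inserting it into the reproducing identity
\[ \td{K}(x_0,t;y_0,s_0) = \int_{\IR^n}\td{K}\bigl(x_0,t;z,\tfrac{s_0+t}{2}\bigr)\,\td{K}\bigl(z,\tfrac{s_0+t}{2};y_0,s_0\bigr)\,d\td{g}_{(s_0+t)/2}(z) \]
via Cauchy--Schwarz with weights $e^{\pm\lambda\psi(z)}$, where $\psi$ smooths $z\mapsto\min\{d_{g_{\eucl}}(x_0,z),\,d_{g_{\eucl}}(x_0,y_0)\}$, and optimizing over $\lambda$ (with $\lambda\sim d_{g_{\eucl}}(x_0,y_0)/(t-s_0)$) produces
\[ \td{K}(x_0,t;y_0,s_0) \le \frac{C(A)\,e^{C(A)\theta}}{(t-s_0)^{n/2}}\exp\!\Bigl(-\frac{d_{g_{\eucl}}^2(x_0,y_0)}{C(A)(t-s_0)}\Bigr). \]
Choosing $\theta = \theta(A)<\tfrac12$ so small that $e^{C(A)\theta}\le 2$, and transporting back through $\Phi_t$, gives the first inequality of the lemma. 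The second follows by integrating the first: as $dg_s$ is comparable to the Lebesgue measure, the tail integral is at most $C(A)(t-s)^{-n/2}\int_{|x-y|\ge r}\exp(-|x-y|^2/(D(t-s)))\,dy$, and the substitution $v=(x-y)/\sqrt{D(t-s)}$ bounds this by $C(A)\int_{|v|\ge r/\sqrt{D(t-s)}}e^{-|v|^2}\,dv\le C_2(A)\exp(-r^2/(2D(t-s)))$, after which enlarging $D$ absorbs the extra factor.

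The main obstacle is the off-diagonal step: upgrading the $L^2$ Davies--Gaffney decay to a genuine pointwise Gaussian requires the self-improvement through the reproducing identity and the optimization in $\lambda$, and throughout one must verify that every auxiliary constant --- the transferred Nash constant, the bounds on $\td{R}$, on $X_{g_{\eucl}}(g_t)$ and on the logarithmic derivative of the volume element, and the distortion of the $\Phi_t$ --- depends only on the $1.1$-bilipschitz bound and on $A$; this is where the derivative bounds $|\partial^m g_t|<A$ and the freedom to shrink $\theta$ enter. As an alternative one could simply quote the classical Gaussian upper bound for fundamental solutions of uniformly parabolic operators with bounded, smooth coefficients and bounded drift on $\IR^n$, applied to $\Delta_{g_t}-\partial_{X_{g_{\eucl}}(g_t)}$, whose ellipticity constant and coefficient bounds are controlled by $1.1$ and $A$.
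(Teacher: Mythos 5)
Your outline agrees with the paper's: pass to the Ricci flow background $\td g_t = \Phi_t^* g_t$, control the distortion of the DeTurck diffeomorphisms $\Phi_t$ (bilipschitz close to the identity, displacement $\lesssim C(A)|t-s|$), obtain a Gaussian upper bound for $\td K$ in terms of $d_{g_{\eucl}}$, then transport the bound back through $\Phi_t$, shrinking $\theta$ as needed, and finally integrate to get the tail estimate. Where you part ways with the paper is in the central step: the paper simply cites \cite[Theorem~26.25]{CetalIII} (Gaussian upper bounds for the heat kernel under Ricci flow with bounded geometry), whereas you give a self-contained derivation of that bound via the standard Nash/Moser on-diagonal estimate together with a Davies--Gaffney integrated energy estimate, self-improved to a pointwise Gaussian through the reproducing identity and optimization in the exponential weight. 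Both are valid; the paper's citation is shorter and keeps the argument focused on the localization that is the real point of the note, while your route makes the dependence on $A$ fully explicit and does not rely on a Ricci-flow-specific reference. Your closing remark that one could instead invoke classical Gaussian bounds for uniformly parabolic operators with bounded smooth coefficients and bounded drift, applied directly to $\Delta_{g_t}-\partial_{X_{g_{\eucl}}(g_t)}$, is a genuine simplification: it avoids the passage to $\td g_t$ and the bookkeeping of the $\Phi_t$-distortion altogether, at the cost of working with a drift term. One small imprecision in your Davies step: the factor $\td K(x_0,t;z,\cdot)$ in the reproducing identity solves the conjugate heat equation, not the forward one, so the weighted $L^2$ energy estimate has to be run backward in $s$ for that factor (with the extra $-\td R\,\td K$ term absorbed, since $|\td R|\le C(A)$); this is routine but worth saying.
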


We remark that we can actually choose $D > 4$ arbitrarily as in the work of Cheng, Li and Yau (cf \cite{CLY}).

\begin{proof}
Consider the associated Ricci flow $\td{g}_t = \Phi^*_t g_t$, where $\Phi_t$ is defined as in (\ref{eq:Phi}) with $\Phi_0 = \id_{\IR^n}$ and the heat kernel $\td{K} (x,t; y,s) = K(\Phi_t^{-1} (x), t; \Phi^{-1}_s (y), s)$ on a Ricci flow background.
Using the derivative bounds on $g_t$, we find that for small enough $\theta$, the metrics $\td{g}_t$ stay $1.2$-bilipschitz close to $g_{\eucl}$.
Moreover, we can find a constant $C' = C'(A) < \infty$ such that for all $x \in \IR^n$ and $s, t \in [0, \theta]$
\[ \dist_{g_{\eucl}} (\Phi_t(x),\Phi_s(x)) < C' |s-t|. \]

Using \cite[Theorem 26.25]{CetalIII} and the derivative bounds on $g_t$, we get that for sufficiently small $\theta$
\[ \td{K} (x,t; y,s ) < \frac{C}{(t-s)^{n/2}} \exp \Big( { - \frac{ d_{g_{\eucl}}^2 (x, y)}{D (t-s)}} \Big). \]
Here $C = C(A), D= D(A) <\infty$ are some uniform constants.
So
\begin{alignat*}{1}
 K(x,t; y,s ) &< \frac{C}{(t-s)^{n/2}} \exp \Big( { - \frac{ d_{g_{\eucl}}^2 (\Phi_t(x), \Phi_s(y))}{D (t-s)}} \Big) \\
& \leq \frac{C}{(t-s)^{n/2}} \exp \Big( { - \frac{ d_{g_{\eucl}}^2 (\Phi_t(x), \Phi_t(y)) - d_{g_{\eucl}}^2 (\Phi_t(y), \Phi_s(y)) }{2D (t-s)}} \Big) \\
& \leq \frac{C}{(t-s)^{n/2}} \exp \Big( { - \frac{ d_{\td{g}_t}^2 (x,y) - C' (t-s)^2 }{2D (t-s)}} \Big) \\
& \leq \frac{C''}{(t-s)^{n/2}} \exp \Big( { - \frac{ d_{g_{\eucl}}^2 (x,y) }{2(1.2)^2 D (t-s)}} \Big)
\end{alignat*}
This proves the first assertion of the lemma, after adjusting $D$.
The second assertion follows by integration and adjusting $D$ again.
\end{proof}

We now state and prove the our key estimate:

\begin{Lemma} \label{Lem:scalestimate}
There is an $\eps > 0$ and for every $\delta > 0$ there is a $\tau = \tau ( \delta) > 0$ such that the following is true:

Let $g_0$ be a $C^2$-Riemannian metric on $\IR^n$ that is $(1+ \eps)$-bilipschitz close to the standard Euclidean metric $g_{\eucl}$ and assume that $g_0 - g_{\eucl}$ is compactly supported.
Assume that $R (g_0) > a$ on $B (o, 1)$ for some $a \in \IR$.
Then there is a solution $(g_t)_{t \in [0,1)}$ to the Ricci DeTurck flow equation (\ref{eq:RdTflow-eucl}) with initial metric $g_0$ such that $R (o, t) > a - \delta$ for all $t \in [0, \tau]$.
\end{Lemma}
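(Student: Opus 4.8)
The plan is to start the Ricci DeTurck flow from $g_0$ using Lemma \ref{Lem:KochLamm}, which gives a solution $(g_t)_{t \in [0,1)}$ that is $1.1$-bilipschitz to $g_{\eucl}$, smooth on $\IR^n \times (0,1)$, and satisfies the derivative bounds $|\partial^m g_t| < C_1 t^{-m/2}$ for $m = 0, \ldots, 10$. The scalar curvature $R$ of the associated Ricci flow $\td g_t = \Phi_t^* g_t$ satisfies the evolution equation \eqref{eq:scalarcurvevol}, $\partial_t \td R = \Delta_{\td g_t} \td R + 2|\Ric|^2 \geq \Delta_{\td g_t} \td R$. Hence $\td R(x,t) \geq \int_M \td K(x,t;y,s)\, \td R(y,s)\, d\td g_s(y)$ for $s < t$, by the standard comparison/reproduction argument using the conjugate heat kernel (this uses $\int \td K\, d\td g_s = 1$, i.e.\ \eqref{eq:KintegralRF}). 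Pushing forward by $\Phi_t$, and noting $R(x,t) = \td R(\Phi_t^{-1}(x),t)$, this becomes
\[
R(x,t) \;\geq\; \int_{\IR^n} K(x,t;y,s)\, R(y,s)\, dg_s(y), \qquad s < t .
\]

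Next I would take $s \searrow 0$. The subtlety is that $R(g_0)$ is only continuous (the metric is $C^2$), and a priori I only have a one-sided pointwise bound $R(g_0) > a$ on $B(o,1)$ and no two-sided bound outside. So I would split the integral over $B(o,1)$ and its complement. On $B(o,1)$ use $R(y,s) \geq a - (\text{small})$ once $s$ is small, by continuity of $R(g_t)$ up to $t=0$ in $C^0$ (which follows from Lemma \ref{Lem:KochLamm}(b): $g_t \to g_0$ in $C^2_{\mathrm{loc}}$, and on the smooth part interior parabolic estimates let me pass to the limit — alternatively, use that $R(g_{i,t}) \to R(g_0)$ and a standard approximation). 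On the complement $\IR^n \setminus B(o,1)$, the curvature is bounded below by some (possibly very negative) constant $-\Lambda$ depending on $g_0$ — but here is where one must be careful, since $\tau$ is supposed to depend only on $\delta$, not on $g_0$. The fix is to apply Lemma \ref{Lem:hkestimate} with the mass-escape bound: for $t \leq \theta$,
\[
\int_{\IR^n \setminus B(o,1)} K(o,t;y,s)\, dg_s(y) \;<\; C_2 \exp\!\Big(-\frac{1}{D t}\Big),
\]
which is tiny. However this still multiplies $\Lambda$, so one genuinely needs a lower curvature bound. Since $g_0$ is $(1+\eps)$-bilipschitz to $g_{\eucl}$ with $\eps$ small and $g_0 - g_{\eucl}$ compactly supported, the flow is smooth and the derivative bounds of Lemma \ref{Lem:KochLamm}(d) give $|R(g_t)| < C_1' t^{-1}$ for $t > 0$; combined with the Gaussian upper bound on $K$ from Lemma \ref{Lem:hkestimate} one estimates $\int_{\IR^n \setminus B(o,1)} K(o,t;y,s) R(y,s)\, dg_s(y)$ directly (at a small positive time $s$, using the pointwise curvature bound there) and sees it is $> -\delta/2$ provided $t$ (hence $s$) is small, uniformly in $g_0$.

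Assembling: choose $\theta = \theta(A)$ from Lemma \ref{Lem:hkestimate} with $A$ the bound coming from Lemma \ref{Lem:KochLamm}(d) at a fixed small scale (rescaling time so the hypotheses of Lemma \ref{Lem:hkestimate} hold on $[0,\theta]$), then take $\tau = \tau(\delta)$ small enough that both the near-region error ($\leq a - \delta/2$ vs.\ $a$, from continuity at $t=0$) and the far-region error ($> -\delta/2$, from the Gaussian tail estimate) are controlled for all $t \in [0,\tau]$. Letting $s \searrow 0$ in the reproduction inequality, using that $K(o,t;\cdot,s)\, dg_s \to K(o,t;\cdot,0)\, dg_0$ weakly and that $R(g_s) \to R(g_0)$ on compact sets, yields
\[
R(o,t) \;\geq\; \int_{B(o,1)} K(o,t;y,0)\, R(y,0)\, dg_0(y) \;-\; \frac{\delta}{2} \;\geq\; a\Big(1 - C_2 e^{-1/(Dt)}\Big) - \frac{\delta}{2} \;>\; a - \delta
\]
for $t \in [0,\tau]$, which is the claim. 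The main obstacle is exactly the control of the far region: making the time $\tau$ depend only on $\delta$ and not on how negative $R(g_0)$ is elsewhere — this is precisely what the Gaussian heat kernel bound of Lemma \ref{Lem:hkestimate}, together with the $t^{-1}$ curvature bound from instantaneous smoothing, is designed to handle, and getting the quantifiers in the right order is the delicate point.
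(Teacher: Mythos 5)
Your overall strategy—apply the reproduction inequality $R(x,t)\geq\int K(x,t;y,s)R(y,s)\,dg_s(y)$, split the spatial integral into $B(o,1)$ and its complement, and control the far region by the Gaussian tail of Lemma~\ref{Lem:hkestimate} together with the $|R|\lesssim t^{-1}$ bound—is the right shape, and you correctly identify that the delicate point is making $\tau$ depend only on $\delta$. But there is a genuine gap in how you propose to push $s\searrow 0$.

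Lemma~\ref{Lem:hkestimate} is stated for a solution on $[0,\theta]$ with \emph{uniform} derivative bounds $|\partial^m g_t|<A$ for $m=0,\dots,10$ over the whole interval. What Lemma~\ref{Lem:KochLamm}(d) actually gives you is $|\partial^m g_t|<C_1 t^{-m/2}$, which blows up as $t\to 0$: the flow is only $C^2$ at $t=0$, not $C^{10}$. So Lemma~\ref{Lem:hkestimate} simply does not apply on any interval containing $t=0$. Your parenthetical ``rescaling time so the hypotheses hold on $[0,\theta]$'' does not fix this: a parabolic rescaling $\hat g_{\hat t}=\lambda^{-1}g_{\lambda\hat t}$ sends $[s,t]$ to $[s/\lambda, t/\lambda]$, and to normalize the lower endpoint away from $0$ you must take $\lambda\sim s$, which stretches the length $t/\lambda-s/\lambda\sim t/s\to\infty$ beyond $\theta$ as $s\searrow 0$. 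No single rescaling covers the gap. There is a second, independent quantifier problem: your final display $a(1-C_2 e^{-1/(Dt)})-\delta/2>a-\delta$ requires $a\,C_2 e^{-1/(Dt)}<\delta/2$, and for $a>0$ large this forces $\tau$ to depend on $a$, contradicting the requirement $\tau=\tau(\delta)$.

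The paper's proof handles both points with an iteration you are missing: it introduces geometrically decreasing times $t_k=(1-\theta)^k$ and slightly growing radii $r_k=1-(1-\beta)^k$, sets $a_k=\inf_{B(o,r_k)}R(\cdot,t_k)$, and applies the heat kernel bound only between the consecutive times $t_{k+1}$ and $t_k$. On each such step, after parabolic rescaling by $t_{k+1}$, the Koch--Lamm bounds become \emph{uniform} ($A=2C_1$), so Lemma~\ref{Lem:hkestimate} is legitimately applicable, and the far-region error is bounded by $|a_{k+1}|\leq C_3/t_{k+1}$ (not by $a$), so it sums to an $a$-independent quantity. Combining $a_k\geq a_{k+1}-\tfrac{2C_2 C_3}{(1-\theta)^k}\exp(-c(1+\lambda)^k)$ with $\liminf_{k\to\infty}a_k\geq a$ (from Lemma~\ref{Lem:KochLamm}(b)) gives $a_k>a-\delta$ for $k\geq N(\delta)$. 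You would need to replace the single $s\searrow 0$ limit with this telescoping scheme to make the argument correct.
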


\begin{proof}
Choose $\eps > 0$ from Lemma \ref{Lem:KochLamm}.
Then $g_0$ can be evolved to a solution $(g_t)_{t \in [0,1)}$ of the Ricci DeTurck flow (\ref{eq:RdTflow-eucl}) such that $g_t$ is $1.1$-bilipschitz to $g_{\eucl}$ for all $t \in [0,1)$.
By Lemma \ref{Lem:KochLamm}(d), we can find a uniform constant $C_3 < \infty$ such that for all $t \in (0,1)$
\begin{equation} \label{eq:C3}
 |{\Rm}| (\cdot, t), \; |R|(\cdot, t) < \frac{C_3}t.
\end{equation}
Note that the scalar curvature bound follows already from the bound on the Riemannian curvature and is only mentioned for convenience.
We even have the more precise lower bound $R (\cdot, t) > - \frac{n}{2t}$, which will, however, not be essential for us.

The scalar curvature $R$ of $g_t$ satisfies the equation
\[ \partial_t R = \Delta R - \partial_{X_{g_{\eucl}} (g_t)}  R + 2 |{\Ric}|^2, \]
which is the analogue of (\ref{eq:scalarcurvevol}) for Ricci DeTurck flow.
So the scalar curvature is a supersolution for the associated heat equation on a Ricci DeTurck flow background (compare with (\ref{eq:heateqRdT})).
Hence it follows that for any $x \in \IR^n$ and any $0 < s < t \leq 1$
\begin{equation} \label{eq:Rxtlowerbound}
 R(x,t) \geq \int_{\IR^n} K(x,t; y,s) R(y,s) dg_s(y). 
\end{equation}
Alternatively, this equation can be derived from the corresponding equations involving the heat kernel $\td{K} (x,t; y,s)$ on a Ricci flow background and then pulling back via the diffeomorphisms $\Phi_t$.

Let us now choose the constants that we will be using throughout the proof.
Let $C_1 < \infty$ be the constant from Lemma \ref{Lem:KochLamm} and let $\theta := \theta (2C_1) > 0$ and $D := D (2 C_1), C_2 := C_2 (2C_1) < \infty$ be the constants from Lemma \ref{Lem:hkestimate}.
Next choose $\beta > 0$ small enough such that
\[ 1-\beta > \sqrt{1-\theta} \]
and set
\[ c:= \frac{\beta^2}{D\theta}. \]
Choose $\lambda > 0$ such that
\[ 1+ \lambda < \frac{(1-\beta)^2}{1-\theta} . \]
Note that
\[ \sum_{k = 1}^\infty \frac{2C_2 C_3}{(1-\theta)^k} \exp \big( { - c (1+ \lambda)^k } \big) < \infty. \]
So for any $\delta > 0$, we can find a large number $N = N(\delta) \in \IN$ such that
\begin{equation} \label{eq:lessthandelta}
 \sum_{k = N}^\infty \frac{2C_2 C_3}{(1-\theta)^k} \exp \big( { - c (1+ \lambda)^k } \big) < \delta. 
\end{equation}
We can finally define
\[ \tau = \tau (\delta ) := (1- \theta)^N. \]

Next, we choose times and radii
\[ t_k := (1 - \theta)^k, \qquad \text{and} \qquad r_k := 1 - (1- \beta)^{k}, \]
for $k = 1, 2, \ldots$ and set
\[ a_k := \inf_{B(o, r_k)} R(\cdot, t_k). \]
By (\ref{eq:C3}), we have
\begin{equation} \label{eq:akC3}
 |a_k| \leq \frac{C_3}{t_{k}} 
\end{equation}
and by Lemma \ref{Lem:KochLamm}(b), we find that
\begin{equation} \label{eq:liminf}
 \liminf_{k \to \infty} a_k \geq a. 
\end{equation}

We will now estimate $a_k$ from below in terms of $a_{k+1}$.
Let $x \in B (o, r_k)$.
First note that by Lemma \ref{Lem:hkestimate}, we have
\begin{multline*}
 \int_{\IR^n \setminus B( x, r_{k+1} - r_k ) } K(x,t_k ; y, t_{k+1} ) dg_{t_{k+1}} (y)
 < C_2 \exp \Big({ - \frac{(r_{k+1} - r_k)^2}{D  (t_k- t_{k+1})} }\Big) \\
 < C_2 \exp \Big({ - \frac{  \beta^2 (1-\beta)^{2k}  }{D \theta (1- \theta)^k} }\Big)  
 < C_2 \exp \big({ - c (1+\lambda)^k } \big).
\end{multline*}
Then, by (\ref{eq:Rxtlowerbound}), (\ref{eq:C3}), (\ref{eq:KintegralRFdT}) and (\ref{eq:akC3})
\begin{alignat*}{1}
R(x, t_k) &\geq \int_{\IR^n} K(x,t_k; y, t_{k+1} ) R(y, t_{k+1}) dg_{t_{k+1}} (y) \\
&\geq a_{k+1} \int_{B (x, r_{k+1} - r_k)} K(x,t_k; y, t_{k+1} )  dg_{t_{k+1}} (y) \\
&\qquad\qquad
- \frac{C_3}{t_{k+1}} \int_{\IR^n \setminus B(x, r_{k+1} - r_k)} K(x,t_k; y, t_{k+1} ) dg_{t_{k+1}} (y) \\
&\geq a_{k + 1} - \Big( \frac{C_3}{t_{k+1}} + a_{k+1} \Big) \int_{\IR^n \setminus B (x, r_{k+1} - r_k)} K(x,t_k; y, t_{k+1} ) dg_{t_{k+1}} (y) \\
&\geq a_{k+1} - \frac{2C_3}{t_{k+1}} \cdot C_2 \exp \big( { - c (1+ \lambda)^k } \big).
\end{alignat*}
So we conclude that
\[ a_k \geq a_{k+1} - \frac{2C_2 C_3}{(1-\theta)^k} \exp \big( { - c (1+ \lambda)^k } \big). \]
Together with (\ref{eq:liminf}) and (\ref{eq:lessthandelta}), this implies that for all $k \geq N$
\[ R(o, t_k) \geq a_k > a - \delta. \]
In particular, this proves the claim for $t = \tau$ and for $t = t_k$ for all $k \geq N$.
The lower bound on $R(o, t)$ for any $t \in [0, \tau]$ follows similarly, e.g. by perturbing the parameter $\theta$ slightly or by parabolic rescaling with a bounded factor.
For our purposes, it is, however, enough to know the bound $R(o, t_k) > a - \delta$ for all $k \geq N$.
\end{proof}

We can finally prove Theorem \ref{Thm:mainthm}.

\begin{proof}[Proof of Theorem \ref{Thm:mainthm}]
Consider the constant $\eps > 0$ from Lemma \ref{Lem:scalestimate}.
Assume that for some $x \in M$ and some $\kappa' \in \IR$ we have $R (g, x) < \kappa' < \kappa (x)$.
By restricting $M$ to a subset, we may assume that for some $\kappa'' \in \IR$
\[ R (g, x) < \kappa'  < \kappa'' <  \kappa (y) \qquad \text{for all} \qquad x, y \in M. \]
Next, we choose a chart $\Phi : U \subset M \to \IR^n$ around $x$ such that $\Phi (x) = o$ and such that $\Phi_* g$ on $\Phi (U)$ is $(1+\eps)$-bilipschitz to $g_{\eucl}$.
By rescaling, we may assume that $\Phi (U)$ contains the closure of $B(o, 1)$.

Let $\varphi : \IR^n \to [0,1]$ be a smooth cutoff function that is constantly equal to $1$ on $B(o, 1)$ and whose support is contained in $\Phi(U)$.
Consider the metric
\[ g_0 := \varphi \Phi_* g + (1- \varphi ) g_{\eucl}. \]
This metric is still $(1+\eps)$-bilipschitz to $g_{\eucl}$ and $C^2$ and satisfies
\begin{equation} \label{eq:lessthankappas}
 R (g_0,  o) < \kappa' .
\end{equation}
Similarly, the metrics
\[ g_{i, 0} := \varphi \Phi_* g_i + (1- \varphi ) g_{\eucl} \]
are $(1+\eps)$-bilipschitz close to $g_{\eucl}$ and $C^2$ and satisfy
\[ R (g_{i, 0}) > \kappa'' \qquad \text{on} \qquad B(o, 1).\]
Moreover, $g_{i,0}$ converge to $g_0$ uniformly in the $C^0$-sense.

Apply Lemma \ref{Lem:scalestimate} to each metric $g_{i, 0}$ with $a = \kappa''$ and $\delta = \frac12 (\kappa'' - \kappa')$.
Then we get a sequence of Ricci DeTurck flows $(g_{i, t})_{t \in [0, 1)}$ such that for any $t \in [0, \tau (\delta)]$ we have
\[ R ( g_{i,t},  o) > \kappa'' - \delta . \]
By Lemma \ref{Lem:KochLamm}(e), these Ricci DeTurck flows converge to the Ricci DeTurck flow $(g_t)_{t \in [0,1)}$ starting from $g_0$.
The convergence is uniformly $C^0$ on $\IR^n \times [0,1)$ and locally smooth on $\IR^n \times (0,1)$.
So for all $t \in (0, \tau]$
\[ R (g_t, o) \geq \kappa'' - \delta. \]
Using Lemma \ref{Lem:KochLamm}(b), it follows that
\[ R (g_0, o) = \lim_{t \to 0} R (g_{0,t}, o) \geq \kappa'' - \delta > \kappa', \]
in contradiction to (\ref{eq:lessthankappas}).
\end{proof}

\begin{bibdiv}
\begin{biblist}
\bib{CLY}{article}{author={Cheng, Siu Yuen},
   author={Li, Peter},
   author={Yau, Shing Tung},
   title={On the upper estimate of the heat kernel of a complete Riemannian
   manifold},
   journal={Amer. J. Math.},
   volume={103},
   date={1981},
   number={5},
   pages={1021--1063},
   issn={0002-9327},
   review={\MR{630777 (83c:58083)}},
   doi={10.2307/2374257}}
   
 \bib{CetalIII}{book}{%
   author={Chow, Bennett},
   author={Chu, Sun-Chin},
   author={Glickenstein, David},
   author={Guenther, Christine},
   author={Isenberg, James},
   author={Ivey, Tom},
   author={Knopf, Dan},
   author={Lu, Peng},
   author={Luo, Feng},
   author={Ni, Lei},
   title={The Ricci flow: techniques and applications. Part III.
   Geometric-analytic aspects},
   series={Mathematical Surveys and Monographs},
   volume={163},
   publisher={American Mathematical Society, Providence, RI},
   date={2010},
   pages={xx+517},
   isbn={978-0-8218-4661-2},
   review={\MR{2604955 (2011g:53142)}},
   doi={10.1090/surv/163}}

\bib{DeTurck}{article}{%
   author={DeTurck, Dennis M.},
   title={Deforming metrics in the direction of their Ricci tensors},
   journal={J. Differential Geom.},
   volume={18},
   date={1983},
   number={1},
   pages={157--162},
   issn={0022-040X},
   review={\MR{697987 (85j:53050)}}}

\bib{Gromov}{article}{%
   author={Gromov, Misha},
   title={Dirac and Plateau billiards in domains with corners},
   journal={Cent. Eur. J. Math.},
   volume={12},
   date={2014},
   number={8},
   pages={1109--1156},
   issn={1895-1074},
   review={\MR{3201312}},
   doi={10.2478/s11533-013-0399-1}}

\bib{Koch-Lamm}{article}{%
   author={Koch, Herbert},
   author={Lamm, Tobias},
   title={Geometric flows with rough initial data},
   journal={Asian J. Math.},
   volume={16},
   date={2012},
   number={2},
   pages={209--235},
   issn={1093-6106},
   review={\MR{2916362}},
   doi={10.4310/AJM.2012.v16.n2.a3}}

\bib{Koch-Lamm-II}{article}{
   author={Koch, Herbert},
   author={Lamm, Tobias},
   title={Parabolic equations with rough data},
date={2013},
eprint={http://arxiv.org/abs/1310.3658}}
\end{biblist}
\end{bibdiv}
\end{document}